\renewcommand{\epsilon}{\varepsilon}
\newtheoremstyle{nonum}{}{}{\itshape}{}{\bfseries}{.}{ }{\thmnote{#3}}
\newtheorem{thm}{Theorem}[section]
\newtheorem*{thm*}{Theorem}
\newtheorem{cor}[thm]{Corollary}
\newtheorem{lem}[thm]{Lemma}
\newtheorem{rem}[thm]{Remark}
\newtheorem{exm}[thm]{Example}
\newtheorem*{definition*}{Definition}
\newtheorem{fact}[thm]{Fact}
\newtheorem*{rems*}{Remarks}
\theoremstyle{nonum}
\newcommand{\R}{\mathbb R}
\def\A{{\cal A}}
\def\L{{\cal L}}
\def\J{{\cal J}}
\def\boxL{{ \square}}
\def\boxA{{ \boxdot}}
\def\boxJ{{ \boxtimes}}
\def\calL{{\cal L}}
\def\L{{\calL}}
\def\vol{{\rm Vol}}
\def\intr{\int_{\R^n}}
\def\epi{{\rm epi}\,}
\def\epif{\epi \varphi }
\def\cvx{\rm{Cvx}}
\newcommand{\jinfp}{\diamond\kern-0.62em{\cdot}} 
\begin{document}
\title{On the Linear Structures Induced by the Four Order
Isomorphisms Acting on $\cvx_0(\R^n)$}
\date{}
\author{D.I. Florentin, A. Segal}
\maketitle
\begin{abstract}
It is known that the volume functional $\,\phi\mapsto\int e^{-\phi}\,$
satisfies certain concavity or convexity inequalities with respect to
three of the four linear structures induced by the order isomorphisms
acting on $\cvx_0(\R^n)$. In this note we define the fourth linear
structure on $\cvx_0(\R^n)$ as the pullback of the standard linear
structure under the $\J$ transform.
We show that, interpolating with respect to this linear structure, no
concavity or convexity inequalities hold, and prove that a
quasi-convexity inequality is violated only by up to a factor of $2$.
We also establish all the order relations which the four dif{}ferent
interpolations satisfy.
\end{abstract}

\section{Introduction}
In this note we study properties of the volume functional
$\vol\left(\phi\right) := \int e^{-\phi}$, defined on $\cvx_0(\R^n)$, the
class of all lower semi continuous convex functions from $\R^n$ to
$[0, \infty]$ which attain the minimal value of $0$ at the origin. As
a first example let us mention that the classical H\"older inequality
implies that $\log(\vol)$ is convex with respect to the standard linear
structure on $\cvx_0(\R^n)$, namely pointswise addition:
\begin{equation}\label{eq-holder}
\intr e^{-\left((1-\lambda) \varphi + \lambda \psi\right)} \le
\left( \intr e^{-\varphi} \right)^{1-\lambda}
\left( \intr e^{-   \psi} \right)^\lambda.
\end{equation}
We thus say that $\vol$ is log-convex, or $0$-convex, with respect to
pointwise addition. For $p\neq 0$, we say that $\vol$ is $p$-convex if
for any $\varphi,\psi\in \cvx_0(\R^n),\,\lambda\in [0,1]$ one has:
\[
\vol\left( (1-\lambda)\varphi + \lambda \psi \right) \le
\left(
(1-\lambda) \vol(\varphi)^p
  +\lambda  \vol(   \psi)^p
\right)^\frac1p.
\]
We use the notation $M_{p, \lambda}(a,b) =
\left( (1-\lambda)a^p + \lambda b^p \right)^\frac1p$ for the
$p$-average of two non-negative numbers $a,\,b$, with weights
$(1-\lambda),\,\lambda$ respectively. It is known that $\vol$ does
not satisfy any $p$-concavity inequality with respect to pointwise
addition. However, there exists additional linear structures defined
on $\cvx_0(\R^n)$, which we next describe.

In \cite{AM-Hidden}, Artstein and Milman described all the order
isomorphisms acting on the class of {\em geometric convex functions}
$\cvx_0(\R^n)$. They proved that up to linear terms, there exist only
two order reversing isomorphisms, namely the Legendre transform $\L$
and the polarity transform $\A$, and two order preserving
isomorphisms, namely the identity and the gauge transform $\J$. We
refer the reader to \cite{AM-Hidden} for the definitions and a
detailed description of these transforms. Each of the transforms
induces a linear structure on $\cvx_0(\R^n)$. For example, the
inf-convolution $\varphi \boxL \psi$ of two geometric convex
functions $\varphi,\,\psi$ may be defined as the pullback of standard
addition under the Legendre transform:
\begin{equation*}\label{eq-boxl-def}
\varphi \boxL \psi : = \L^{-1} \left( \L\varphi + \L\psi \right).
\end{equation*}
The renowned Pr\'ekopa-Leindler inequality (see \cite{Leindler,Prekopa})
states that $\vol$ is log-concave with respect to the linear structure
$\boxL$. Namely, for all $\varphi,\psi\in \cvx_0(\R^n)$ and
$\lambda \in (0,1)$:
\begin{equation}\label{eq-PL}
\intr e^{-\varphi \boxL_\lambda \psi} \ge
\left( \intr e^{-\varphi} \right)^{1-\lambda}
\left( \intr e^{-   \psi} \right)^\lambda,
\end{equation}
where $\varphi \boxL_\lambda \psi = \L^{-1}
\left( (1-\lambda)\L\varphi + \lambda\L\psi \right)$ denotes the
average of $\varphi$ and $\psi$ with respect to $\boxL$.
The linear structure induced by the polarity transform $\A$ was
defined in \cite{AFS} by
\begin{equation*}\label{eq-boxa-def}
\varphi \boxA \psi : = \A^{-1} \left( \A\varphi + \A\psi \right),
\end{equation*}
and it was proven that $\vol$ is $(-1)$-concave with respect to
$\boxA$, namely:
\begin{equation}\label{eq-PPL}
\intr e^{-\varphi \boxA_\lambda \psi} \ge
\left(
\frac{1-\lambda}{\intr e^{-\varphi}} +
\frac{  \lambda}{\intr e^{-\psi   }} 
\right)^{-1}.
\end{equation}
Here, $\varphi \boxA_\lambda \psi = \A^{-1}
\left( (1-\lambda)\A\varphi + \lambda\A\psi \right)$ denotes averaging
the functions $\varphi,\,\psi$ with respect to $\boxA$.
In an analogous way, we define here a fourth linear structure on
$\cvx_0(\R^n)$, denoted by $\boxJ$, as the pullback of standard
addition under the $\J$ transform, that is:
\begin{equation*}\label{eq-boxj-def}
\varphi \boxJ \psi : = \J^{-1} \left( \J\varphi + \J\psi \right),
\quad \mbox{ and } \quad
\varphi \boxJ_\lambda \psi : = \J^{-1}
\left(
	(1-\lambda) \J\varphi + \lambda \J\psi
\right).
\end{equation*}
In light of the log-convexity of $\vol$ with respect to the standard
linear structure (H\"older's inequality \eqref{eq-holder}), and its
log-concavity and $(-1)$-concavity with respect to $\boxL$ and
$\boxA$ respectively (inequalities \eqref{eq-PL} and \eqref{eq-PPL}),
it is natural to ask whether $\vol$ is $p$-convex, for some $p$, with
respect to $\boxJ$. In this note we provide a negative answer to this
question, but show that $\vol$ does possess some convexity property,
i.e. a quasi-convexity inequality is satisfied up to a constant (in
contrast to any type of concavity, which cannot be considered even in
this weak sense, as shown in Corollary \ref{cor-no-concavity}). More
precisely, for any $p\in [-\infty, \infty]$ we define
\[
c_p =
\sup_{\varphi,\psi,\lambda}
\left\{
\frac
{\intr e^{- \varphi \boxJ_\lambda \psi}}
{M_{p, \lambda}\left( \intr e^{-\varphi},\, \intr e^{-\psi} \right)}
\right\}.
\]
Clearly $c_p$ is decreasing in $p$, since $M_{p,\lambda}(a,b)$ is
increasing. Theorem \ref{thm-no-convex} states that $c_\infty > 1$
(which implies $c_p > 1$ for all $p$).
\begin{thm}\label{thm-no-convex}
There exist $\varphi,\psi\in \cvx_0(\R^n)$ and $\lambda\in(0,1)$ such that
\begin{equation*}\label{eq-counter-example}
\max \left\{ \intr e^{-\varphi},\, \intr e^{-\psi} \right\} <
\intr e^{- \varphi \boxJ_\lambda \psi}.
\end{equation*}
\end{thm}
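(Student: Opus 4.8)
The plan is to transport the whole statement through the $\J$ transform and reduce it to the failure of a single functional to be quasi-convex. Recall from \cite{AM-Hidden} that $\J=\L\A=\A\L$, so $\J$ is an order preserving involution and $\J^{-1}=\J$. Writing $u:=\J\varphi$ and $v:=\J\psi$ turns the interpolant into $\varphi\boxJ_\lambda\psi=\J\big((1-\lambda)u+\lambda v\big)$, and the three quantities in the theorem become $\intr e^{-\varphi}=F(u)$, $\intr e^{-\psi}=F(v)$ and $\intr e^{-\varphi\boxJ_\lambda\psi}=F\big((1-\lambda)u+\lambda v\big)$, where
\[
F(w):=\intr e^{-\J w},\qquad w\in\cvx_0(\R^n).
\]
Thus it suffices to exhibit $u,v\in\cvx_0(\R^n)$ and $\lambda\in(0,1)$ with $F\big((1-\lambda)u+\lambda v\big)>\max\{F(u),F(v)\}$, i.e. to show that $F$ is not quasi-convex for pointwise addition, and then to set $\varphi=\J u$, $\psi=\J v$.

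The main tool I would establish first is a closed form for $F$ in terms of the sub-level sets of $w$. Using $\J=\A\L$ and the defining formula for $\A$, the sub-level set of $\J w$ is
\[
\{\J w\le t\}=\{x:\ \L(t\,\L w)(x)\le 1\}=\{x:\ t\,w(x/t)\le 1\}=t\cdot\{w\le 1/t\},
\]
because $\L(t\,\L w)(x)=t\,w(x/t)$ (as $\L\L w=w$). The layer-cake identity $\intr e^{-f}=\int_0^\infty e^{-t}\vol\{f\le t\}\,dt$ applied to $f=\J w$ then yields
\[
F(w)=\int_{0}^{\infty}e^{-t}\,t^{\,n}\,\vol\big(\{w\le 1/t\}\big)\,dt .
\]
As a check: for a gauge $w=\|\cdot\|_K$ one has $\vol\{w\le 1/t\}=t^{-n}\vol(K)$, so $F(w)=\vol(K)$, while for the indicator of a convex body $K$ (equal to $0$ on $K$ and $+\infty$ off it) one gets $F(w)=n!\,\vol(K)$; both agree with computing $\J$ directly.

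This formula exposes both the obstruction and the mechanism. Since $\{(1-\lambda)u+\lambda v\le s\}\subseteq\{u\le s\}\cup\{v\le s\}$ at every level $s$, one has $\vol\{(1-\lambda)u+\lambda v\le s\}\le\vol\{u\le s\}+\vol\{v\le s\}$ and hence $F\big((1-\lambda)u+\lambda v\big)\le F(u)+F(v)$; this is exactly the ``quasi-convexity up to a factor $2$'' of the abstract, and it already kills the naive candidates. For two gauges $u,v$ the interpolant is the gauge of the harmonic (radial) combination of $K,L$, of volume $\le\max$; for two indicators it is the indicator of $K\cap L$, again of volume $\le\max$; and two smooth strictly convex functions average to one with larger Hessian determinant, hence \emph{smaller} sub-level sets. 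In dimension one (or for radial $u,v$) the interpolant's sub-level set always lies between those of $u$ and $v$. Consequently any counterexample must be genuinely $n\ge2$ and use functions with flat faces.

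The construction I would pursue is to choose polyhedral $u,v\in\cvx_0(\R^2)$ whose volume profiles $V_u(s):=\vol\{u\le s\}$ and $V_v(s):=\vol\{v\le s\}$ \emph{cross}: one is larger for small $s$, the other for large $s$. Since the weight $e^{-t}t^{n}$ in $F$ concentrates near $t\approx n$, placing the crossing there forces
\[
\int_0^\infty e^{-t}t^{\,n}\max\{V_u(1/t),V_v(1/t)\}\,dt>\max\{F(u),F(v)\}
\]
with a definite margin. It then remains to arrange the two sub-level families to be nearly nested with steep, flat-faced boundaries, so that $V_{(1-\lambda)u+\lambda v}(s)$ tracks $\max\{V_u(s),V_v(s)\}$ across the support of the weight. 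The main obstacle is precisely this last estimate: the inclusion above shows the interpolant's sub-level set falls short of the larger one in a boundary layer, where the smaller function still exceeds $s$, so one must bound the resulting tracking gap $\int e^{-t}t^{n}\big(\max\{V_u,V_v\}-V_{(1-\lambda)u+\lambda v}\big)(1/t)\,dt$ below the crossing margin. Controlling this gap — by making the boundaries steep and exploiting the freedom in $\lambda$ and in the asymmetry $F(u)\neq F(v)$ — is the crux of the argument.
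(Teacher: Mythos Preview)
Your reformulation through $\J$ is correct and matches the paper's setup; in particular your layer-cake formula
\[
F(w)=\int_0^\infty e^{-t}t^{\,n}\,\vol\{w\le 1/t\}\,dt
\]
is valid. But this is just the Fubini of the more useful \emph{pointwise} identity
\[
F(w)=\int_{\R^n}G\big(w(x)\big)\,dx,\qquad G(a)=\int_0^{1/a}s^{\,n} e^{-s}\,ds,
\]
and it is this second form that the paper exploits. A direct computation gives $G''(a)=e^{-1/a}a^{-(n+4)}\big(a-\tfrac{1}{n+2}\big)$, so $G$ is strictly \emph{concave} on $[0,\tfrac{1}{n+2}]$. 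Hence, on the set of $u,v\in\cvx_0(\R^n)$ taking values in $[0,\tfrac{1}{n+2}]$, the functional $F$ is strictly \emph{concave} for pointwise addition: any such pair with $u\neq v$ on a set of positive measure gives $F\big(\tfrac{u+v}{2}\big)>\tfrac{F(u)+F(v)}{2}$. Choosing $v(x)=u(-x)$ forces $F(u)=F(v)$ and finishes the proof immediately. The paper takes $u=\max\{1_C^\infty,\|\cdot\|_H\}$ with $C=[-1,1]^n$ and $H=\{x_1\le n+2\}$, so that $u\le\tfrac{1}{n+2}$ on its support.

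Your proposed route via crossing volume profiles and ``tracking $\max\{V_u,V_v\}$'' is substantially harder, and you yourself flag the tracking-gap estimate as the crux without carrying it out --- so the proposal is incomplete. Two further points. First, your claim that the example must be ``genuinely $n\ge 2$'' is wrong: the paper's construction works verbatim for $n=1$, because $u,v$ are allowed to take the value $+\infty$; your one-dimensional argument tacitly assumes finite-valued functions. Second, in the paper's example the profiles $V_u$ and $V_v$ are \emph{identical} by reflection symmetry, so there is no crossing at all; the gain comes not from profile-crossing but from the fact that $\{u\le s\}$ and $\{v\le s\}$, while of equal volume, occupy different regions, and the sub-level set of the pointwise average is strictly larger than either --- precisely what concavity of $G$ encodes. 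Recasting your own formula back into the pointwise form $\int G(w)$ is the missing step.
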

However, up to a constant the answer is positive. More precisely,
$c_\infty \le 2$.
\begin{thm}\label{thm-convex-upto2}
For any $\varphi,\psi\in \cvx_0(\R^n)$ and $\lambda \in [0,1]$, we have:	\begin{equation*}
\intr e^{- \varphi \boxJ_\lambda \psi} \le 2 \cdot
\max \left\{ \intr e^{-\varphi},\, \intr e^{-\psi} \right\}.
\end{equation*}
\end{thm}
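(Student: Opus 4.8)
The plan is to reduce the statement to an elementary containment between sublevel sets and then integrate via the distribution (Cavalieri) formula
\[
\intr e^{-\phi} = \int_0^\infty e^{-t}\,\vol\!\left(\{\phi\le t\}\right)\,dt,
\]
valid for every $\phi\in\cvx_0(\R^n)$. The engine of the whole argument is the geometric description of the gauge transform on sublevel sets: for every $t>0$ one has $\{\J\varphi\le t\}=t\,\{\varphi\le 1/t\}$. This is the level-set form of $\J$ (it is immediate that the right-hand side defines an order-preserving involution carrying the function $0\cdot\mathbf 1_K+\infty\cdot\mathbf 1_{K^c}$ to the gauge of $K$ and back, which is precisely how $\J$ acts; it also follows from $\J=\A\L$). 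In particular $\J$ is an involution, so $\J^{-1}=\J$ and the same identity holds for $\J^{-1}$. I will take this level-set identity as the key property of $\J$.

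The main step is the inclusion
\[
\{\varphi\boxJ_\lambda\psi\le t\}\subseteq\{\varphi\le t\}\cup\{\psi\le t\}
\qquad (t>0,\ \lambda\in[0,1]).
\]
To prove it, write $h:=(1-\lambda)\J\varphi+\lambda\J\psi$, so that $\varphi\boxJ_\lambda\psi=\J h$. Since $h$ is a convex combination of the two nonnegative functions $\J\varphi,\J\psi$, it dominates their pointwise minimum, whence $\{h\le r\}\subseteq\{\J\varphi\le r\}\cup\{\J\psi\le r\}$ for every $r>0$. Now apply the level-set identity three times: $\{\J h\le t\}=t\{h\le 1/t\}$, and with $r=1/t$ one has $\{\J\varphi\le 1/t\}=\tfrac1t\{\varphi\le t\}$ and likewise for $\psi$. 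Substituting and multiplying by $t$, the dilations $t$ and $1/t$ cancel exactly and the inclusion at the \emph{same} level $t$ drops out, with no dimensional distortion.

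Finally, from the inclusion and subadditivity of volume, $\vol(\{\varphi\boxJ_\lambda\psi\le t\})\le\vol(\{\varphi\le t\})+\vol(\{\psi\le t\})$; multiplying by $e^{-t}$ and integrating over $t\in(0,\infty)$ yields, via the Cavalieri formula, the slightly stronger subadditive bound $\intr e^{-\varphi\boxJ_\lambda\psi}\le\intr e^{-\varphi}+\intr e^{-\psi}$, which is at most $2\max\{\intr e^{-\varphi},\intr e^{-\psi}\}$. This is the asserted estimate.

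The main obstacle is obtaining the inclusion at the common level $t$. The naive route — bounding $h\ge\max\{(1-\lambda)\J\varphi,\lambda\J\psi\}$ and comparing sublevel sets — forces the rescalings $\{\varphi\le\,\cdot\,\}\mapsto(1-\lambda)^{-1}\{\varphi\le\,\cdot\,\}$ and $\{\psi\le\,\cdot\,\}\mapsto\lambda^{-1}\{\psi\le\,\cdot\,\}$, which inflate volumes by $(1-\lambda)^{-n}$ and $\lambda^{-n}$ and hence only give $c_\infty\le 2^n$. The dimension-free constant $2$ comes from using the convex-combination (hence minimum) bound instead of the maximum, so that the dilation built into the level-set action of $\J$ cancels precisely and produces the clean union $\{\varphi\le t\}\cup\{\psi\le t\}$, on which the factor $2$ is just the elementary $\vol(A\cup B)\le\vol A+\vol B\le 2\max\{\vol A,\vol B\}$.
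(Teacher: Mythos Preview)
Your proof is correct and rests on the same elementary observation as the paper's: since $(1-\lambda)\J\varphi+\lambda\J\psi\ge\min\{\J\varphi,\J\psi\}$, one immediately obtains the subadditive bound $\vol(\varphi\boxJ_\lambda\psi)\le\vol(\varphi)+\vol(\psi)$. The packaging differs slightly: the paper writes $\vol(\phi)=\intr G\bigl((\J\phi)(x)\bigr)\,dx$ with $G(t)=\int_0^{1/t}s^n e^{-s}\,ds$ decreasing, and uses $G\bigl((1-\lambda)a+\lambda b\bigr)\le G(a)+G(b)$ pointwise in $x$; you instead use Cavalieri together with the level-set identity $\{\J\phi\le t\}=t\{\phi\le 1/t\}$ to get the inclusion $\{\varphi\boxJ_\lambda\psi\le t\}\subseteq\{\varphi\le t\}\cup\{\psi\le t\}$ and then integrate in $t$. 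These two representations are literally the same computation Fubini'd in the other order (substituting your level-set identity into the Cavalieri integral and swapping the $x$- and $t$-integrations recovers $\intr G(\J\phi)$), so the arguments are essentially identical; your formulation has the minor advantage of not invoking the auxiliary measure $\nu$ and making the geometric content (a clean sublevel-set containment) explicit.
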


We provide the following geometric description of $\boxJ$ as the
classical operation of radial harmonic sum, applied to the
epi-graphs, namely:
\begin{fact}\label{fact-Radial-Summation}
Let $\varphi,\psi \in \cvx_0(\R^n)$, and $\lambda\in[0,1]$. Then,
\begin{equation}\label{eq-Radial-Summation}
\epi( \varphi \boxJ_\lambda \psi ) =
\left(\,\,
(1-\lambda) \epi(\varphi)^\circ + \lambda \epi(\psi)^\circ
\,\,\right)^\circ,
\end{equation}
where $\epi \phi = \left\{(x,z)\in\R^n\times\R^+ \,:\, \phi(x) < z \right\}$.
\end{fact}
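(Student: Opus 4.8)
The plan is to translate the definition of $\boxJ$ into the language of epigraphs through the two classical dictionaries attached to the Legendre and polarity transforms, and to exploit the factorization of the gauge transform as a composition of the two order reversing isomorphisms. Concretely, I would rely on three ingredients. First, the geometric meaning of polarity: for every $\varphi\in\cvx_0(\R^n)$ one has $\epi(\A\varphi)=(\epi\varphi)^\circ$, where $(\cdot)^\circ$ is the same polar duality of epigraphs that is used to define $\A$ and $\boxA$ in \cite{AM-Hidden, AFS}; in particular $\A^{-1}=\A$, since polar duality is an involution on the relevant class of convex sets. Second, the geometric meaning of the Legendre structure: since $\boxL$ is inf-convolution and inf-convolution corresponds to Minkowski addition of epigraphs, the weighted version satisfies $\epi(\alpha\boxL_\lambda\beta)=(1-\lambda)\epi\alpha+\lambda\epi\beta$ for all $\alpha,\beta\in\cvx_0(\R^n)$. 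Third, the factorization $\J=\L\circ\A=\A\circ\L$, which follows from the Artstein--Milman classification: $\L\circ\A$ is order preserving and nontrivial, hence equals the unique nontrivial order preserving isomorphism $\J$.

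Granting these, the identity \eqref{eq-Radial-Summation} becomes a short chain of substitutions. Starting from the right hand side and using the first ingredient to replace $(\epi\varphi)^\circ=\epi(\A\varphi)$ and $(\epi\psi)^\circ=\epi(\A\psi)$, the inner Minkowski combination is, by the second ingredient, exactly $\epi(\A\varphi\boxL_\lambda\A\psi)$. Applying the outer polar and the first ingredient once more turns this into $\epi(\A(\A\varphi\boxL_\lambda\A\psi))$. It then remains to check the purely algebraic identity $\A(\A\varphi\boxL_\lambda\A\psi)=\varphi\boxJ_\lambda\psi$. Unfolding the definition of $\boxL_\lambda$ gives $\A\varphi\boxL_\lambda\A\psi=\L^{-1}((1-\lambda)\L\A\varphi+\lambda\L\A\psi)$, and substituting $\L\circ\A=\J$ (so that $\L\A\varphi=\J\varphi$ and $\L\A\psi=\J\psi$) together with $\A\circ\L^{-1}=(\L\circ\A)^{-1}=\J^{-1}$ (using $\A^{-1}=\A$) yields $\A(\A\varphi\boxL_\lambda\A\psi)=\J^{-1}((1-\lambda)\J\varphi+\lambda\J\psi)=\varphi\boxJ_\lambda\psi$, which is precisely the definition of $\boxJ_\lambda$. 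This closes the chain and proves the Fact.

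The routine algebra in the second paragraph is not where the difficulty lies; the main obstacle is setting up the first ingredient rigorously and compatibly with the conventions in \eqref{eq-Radial-Summation}. One must fix the correct notion of polar duality for epigraphs --- these are unbounded convex sets contained in $\R^n\times\R^+$ with the origin on their boundary --- and verify that the polar of such a set is again (the epigraph of) a function in $\cvx_0(\R^n)$, so that every transform appearing in the chain is well defined and lands back in the class. In particular one has to reconcile the strict inequality in the definition of $\epi\phi$ with the closedness required for polarity to be involutive, and to confirm that Minkowski combinations of epigraphs of geometric convex functions are again such epigraphs. Once this dictionary is in place, and once the normalization making $\J=\L\circ\A$ hold on the nose rather than merely up to a linear term is pinned down, the statement follows immediately.
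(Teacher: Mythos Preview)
Your approach is essentially the paper's: reduce to the epigraph dictionaries for $\A$ (polarity) and $\boxL$ (Minkowski addition), together with the factorization $\J=\A\circ\L=\L\circ\A$, and then chase the identity $\A(\A\varphi\boxL_\lambda\A\psi)=\varphi\boxJ_\lambda\psi$.

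The one concrete point where your write-up deviates from the paper is your ``first ingredient''. With the standard polar $K^\circ=\{y:\langle x,y\rangle\le1\ \forall x\in K\}$ used in the statement, the polar of an epigraph (which is unbounded in the $+z$ direction and contains the origin on its boundary) lies in the \emph{lower} half-space $\R^n\times\R^-$, so it cannot itself be an epigraph. The correct dictionary from \cite{AM-Hidden} is $\epi(\A\phi)=R\bigl((\epi\phi)^\circ\bigr)$ with $R(x,z)=(x,-z)$, not $\epi(\A\phi)=(\epi\phi)^\circ$. The paper's proof tracks this reflection explicitly, using that $R$ is linear (so it commutes with Minkowski combinations) and that $R(K^\circ)=(RK)^\circ$; the two reflections then cancel at the end. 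Your last paragraph correctly flags that the polarity convention needs care, and once you insert $R$ into your chain exactly as above, your argument becomes identical to the paper's.
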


The paper is organized as follows. In Section 2 we prove Fact
\ref{fact-Radial-Summation}, and prove pointwise order relations
between $\boxJ$ and the other three summations. In Section 3 we prove
Theorems \ref{thm-no-convex} and \ref{thm-convex-upto2}. In the
Appendix we compute the asymptotics of the lower bound on $c_\infty$
which was obtained in Theorem \ref{thm-no-convex}, and show that in
fact $\,c_\infty > 1 + \frac{c}{\sqrt{n}}$.

\noindent {\bf Acknowledgements:}
The authors would like to thank Artem Zvavitch for useful
discussions. The first named author was partially supported by the
U.S. National Science Foundation Grant DMS-1101636, and also
partially supported by the AMS-Simons Travel Grant, which is
administered by the American Mathematical Society with support from
the Simons Foundation.

\section{Order relations between the linear structures}
In this section we give a geometric description of $\boxJ$, and prove
four (of the possible six) order relations between the four linear
structures $+, \boxL, \boxA, \boxJ$.

In \cite{Firey}, Firey defined the polar mean $K_\lambda$ of convex
bodies $K_0, K_1 \subset \R^n,$ using Minkowski average and the
duality map $K \mapsto K^\circ :=
\left\{
	y \,:\, \forall x \in K, \langle  y, x \rangle \le 1
\right\}$:
\begin{equation*}\label{eq-dualsum}
K_{\lambda} = ((1-\lambda)K_0^\circ + \lambda K_1^\circ)^\circ.
\end{equation*}
The body $K_\lambda$ is often called the {\em radial harmonic
average} of $K_0$ and $K_1$, since:
\begin{equation*}\label{eq-rad-har}
\rho_{K_{\lambda}} = M_{-1,\lambda}
\left(
\rho_{K_0},
\rho_{K_1}
\right),
\end{equation*}
where $\rho_{K_i}$ is the radial function of $K_i$. As mentioned in
the introduction, Fact \ref{fact-Radial-Summation} states that the
operation $\boxJ_\lambda$ corresponds to radial harmonic average of
epi-graphs.

\noindent {\bf Proof of Fact \ref{fact-Radial-Summation}}.
Recall that $\epi(\phi \boxL_\lambda \eta) =
(1-\lambda)\epi(\phi) + \lambda \epi(\eta)$. In \cite{AM-Hidden} it
was shown that, denoting the reflection about $\R^n \times \{0\}$ by
$R(x,z) = (x,-z)$, we have $\epi(\A \phi) = R(\epi(\phi)^\circ)$.
Since $R(K^\circ) = (RK)^\circ$ and $\J = \A \circ \L = \L \circ \A$,
we have:
\begin{eqnarray*}
\epi( \J((1-\lambda)\J\varphi + \lambda \J \psi) ) &=&
\epi( \A \L ((1-\lambda)\L\A \varphi + \lambda \L \A \psi) ) =
\\ &=&
\epi( \A ((\A\varphi) \boxL_\lambda (\A\psi))) =
\\ &=&
R(\epi((\A\varphi) \boxL_\lambda (\A\psi))^\circ) =
\\ &=&
R ( \left(\,\,
	(1-\lambda) R(\epi(\varphi)^\circ) + \lambda R(\epi(\psi)^\circ  )
\,\,\right) ^\circ ) =
\\ &=&
\left(\,\,
(1-\lambda) \epi(\varphi)^\circ + \lambda \epi(\psi)^\circ
\,\,\right)^\circ.
\end{eqnarray*}\qed

The four linear structures $+, \boxL, \boxA, \boxJ$ of{}fer four
dif{}ferent ways to interpolate between two given functions $\varphi,
\psi \in \cvx_0(\R^n)$. We now turn to establish order relations
between the four interpolations $
\varphi \boxL_\lambda \psi,\,
\varphi \boxA_\lambda \psi,\,
\varphi \boxJ_\lambda \psi$, and
$\varphi+_\lambda \psi: = (1-\lambda)\varphi + \lambda \psi$.
First, let us consider two basic examples, one where the functions
$\varphi, \, \psi$ we average are convex indicator functions, and one
where they are norms.
\begin{exm}\label{ex-indicators}
Let $K,T$ be convex bodies such that $\varphi = 1_K^\infty,\,
\psi = 1_T^\infty \in \cvx_0(\R^n)$. Since
$\J 1_K^\infty = \|\cdot\|_K,\,
 \L 1_K^\infty = h_K,\,$ and
$\A 1_K^\infty = 1_{K^\circ}^\infty$, it may be easily checked that:
\begin{eqnarray*}
\varphi \boxA_\frac12 \psi &=& 1_{K \vee T}^\infty,
\qquad \quad
\varphi \boxJ_\frac12 \psi  =  1_{\left(\frac{K^\circ + T^\circ}{2}\right)^\circ}^\infty,
\\
\varphi \boxL_\frac12 \psi &=& 1_{\frac{K+T}{2}}^\infty,
\qquad \quad
\varphi     +_\frac12 \psi  =  1_{K \cap T}^\infty,
\end{eqnarray*}
where $K\vee T$ denotes the convex hull of $K$ and $T$. Note that:
\begin{equation}\label{eq-bodies-inc}
K \cap T
\subseteq
\left(\frac{K^\circ + T^\circ}{2}\right)^\circ
\subseteq
\frac{K+T}{2}
\subseteq
K\vee T.
\end{equation}
Thus the functions $\varphi = 1_K^\infty,\, \psi = 1_T^\infty$
satisfy the following inequalities:
\begin{equation*}
\varphi \boxA_\frac12 \psi \,\,\le\,\,
\varphi \boxL_\frac12 \psi \,\,\le\,\,
\varphi \boxJ_\frac12 \psi \,\,\le\,\,
\varphi     +_\frac12 \psi.
\end{equation*}
Note that all the inclusions in \eqref{eq-bodies-inc} are strict if
$K\neq T$. Thus for $K\neq T$ we get:
\begin{equation}\label{eq-uncompare1}
\varphi \boxL_\frac12 \psi \not\le
\varphi \boxA_\frac12 \psi,
\qquad \quad
\varphi     +_\frac12 \psi \not\le
\varphi \boxJ_\frac12 \psi.
\end{equation}
\end{exm}

\begin{exm}\label{ex-norms}
Let $K,T$ be convex bodies such that $\varphi = \|\cdot\|_K,\,
\psi = \|\cdot\|_T \in \cvx_0(\R^n)$. Since
$\J \|\cdot\|_K = 1_K^\infty,\,
 \L \|\cdot\|_K = 1_{K^\circ}^\infty,\,$ and
$\A \|\cdot\|_K = h_K$, it may be easily checked that:
\begin{eqnarray*}
\varphi \boxL_\frac12 \psi &=& \|\cdot\|_{K \vee T},
\qquad \quad
\varphi     +_\frac12 \psi  =  \|\cdot\|_{\left(\frac{K^\circ + T\circ}{2}\right)^\circ},
\\
\varphi \boxA_\frac12 \psi &=& \|\cdot\|_{\frac{K+T}{2}},
\qquad \quad
\varphi \boxJ_\frac12 \psi  =  \|\cdot\|_{K \cap T}.
\end{eqnarray*}
By \eqref{eq-bodies-inc}, the functions $\varphi = \|\cdot\|_K,\,
\psi = \|\cdot\|_T$ satisfy the following inequalities:
\begin{equation*}
\varphi \boxL_\frac12 \psi \,\,\le\,\,
\varphi \boxA_\frac12 \psi \,\,\le\,\,
\varphi     +_\frac12 \psi \,\,\le\,\,
\varphi \boxJ_\frac12 \psi.
\end{equation*}
As before, assuming $K\neq T$ we have:
\begin{equation}\label{eq-uncompare2}
\varphi \boxA_\frac12 \psi \not\le
\varphi \boxL_\frac12 \psi,
\qquad \quad
\varphi \boxJ_\frac12 \psi \not\le
\varphi     +_\frac12 \psi.
\end{equation}
\end{exm}

From \eqref{eq-uncompare1} and \eqref{eq-uncompare2} we conclude that
the functions $\varphi \boxA_\lambda \psi$ and
$\varphi \boxL_\lambda \psi$ are not comparable in general. Similarly
the functions $\varphi +_\lambda \psi,\, \varphi \boxJ_\lambda \psi$
are not comparable, in general.
However, the remaining four pairs of functions do satisfy, in
general, the order relations exhibited in the examples above.
That is, interpolating with respect to $+, \boxJ$ (induced by the two
order preserving transforms) always yields larger functions compared
to $\boxL, \boxA$ (induced by the order reversing transforms). More
precisely:
\begin{lem}
Let $\varphi, \psi \in \cvx_0(\R^n)$. Then,
\[
\max \left\{ \varphi \boxL_\lambda \psi,\, \varphi \boxA_\lambda \psi  \right\} \le
\min \left\{ \varphi     +_\lambda \psi,\, \varphi \boxJ_\lambda \psi  \right\}.
\]
\end{lem}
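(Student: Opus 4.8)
The plan is to establish the four inequalities
\[
\varphi \boxL_\lambda \psi \le \varphi +_\lambda \psi, \quad
\varphi \boxL_\lambda \psi \le \varphi \boxJ_\lambda \psi, \quad
\varphi \boxA_\lambda \psi \le \varphi +_\lambda \psi, \quad
\varphi \boxA_\lambda \psi \le \varphi \boxJ_\lambda \psi,
\]
by translating each into an inclusion of epi-graphs, since for functions $f, g \in \cvx_0(\R^n)$ one has $f \le g$ pointwise if and only if $\epi(f) \supseteq \epi(g)$. Under this dictionary the four summations become four body operations on epi-graphs: $\boxL_\lambda$ is Minkowski averaging $(1-\lambda)\epi(\varphi) + \lambda\epi(\psi)$, while $\boxJ_\lambda$ is the radial harmonic average $\bigl((1-\lambda)\epi(\varphi)^\circ + \lambda\epi(\psi)^\circ\bigr)^\circ$ by Fact \ref{fact-Radial-Summation}. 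The operation $+_\lambda$ corresponds on epi-graphs to intersection-type averaging, and $\boxA_\lambda$ (being the polarity pullback) corresponds to the convex-hull-type average. Thus each of the four target inequalities reduces to one of the body inclusions already recorded in \eqref{eq-bodies-inc}, namely the chain $K \cap T \subseteq \bigl(\tfrac{K^\circ+T^\circ}{2}\bigr)^\circ \subseteq \tfrac{K+T}{2} \subseteq K \vee T$, applied at the level of epi-graphs rather than to the specific bodies of the examples.

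First I would record the epi-graph description of all four operations as body operations, reading off $\boxL$ and $\boxJ$ from the text and $+_\lambda$, $\boxA_\lambda$ from their definitions via the Legendre and polarity transforms. The cleanest route is to prove the general body chain
\begin{equation*}
(1-\lambda)A^\circ + \lambda B^\circ)^\circ \supseteq (1-\lambda)A + \lambda B \supseteq \text{(convex-hull average)},
\end{equation*}
i.e. to show directly that radial harmonic averaging produces a \emph{smaller} body than Minkowski averaging, and that Minkowski averaging produces a smaller body than the convex-hull operation, with the intersection operation smallest of all. The key analytic inputs are the reverse inclusion $\bigl((1-\lambda)A^\circ + \lambda B^\circ\bigr)^\circ \subseteq (1-\lambda)A + \lambda B$, which follows from the superadditivity of the support function under polarity (equivalently, from the $(-1)$-versus-$1$ comparison of radial functions), and the elementary $(1-\lambda)A + \lambda B \subseteq A \vee B$ together with $A \cap B \subseteq (1-\lambda)A + \lambda B$.

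The main obstacle is handling the averaging weight $\lambda$ correctly and uniformly, since the examples in the excerpt only display the symmetric case $\lambda = \tfrac12$, whereas the lemma asserts the relations for all $\lambda \in [0,1]$. The general-weight versions of the body inclusions are still true but require the weighted form of the polar-Minkowski comparison; I would verify this by passing to radial functions, where the radial harmonic average is literally $M_{-1,\lambda}$ of the radial functions and the Minkowski-average radial function dominates it by the weighted power-mean inequality $M_{-1,\lambda} \le M_{1,\lambda}$, while $M_{1,\lambda}$ is in turn dominated by the radial function of the convex hull. Once these four body inclusions are in hand for arbitrary $\lambda$, reversing the epi-graph/pointwise dictionary immediately yields the claimed chain $\max\{\varphi \boxL_\lambda \psi, \varphi \boxA_\lambda \psi\} \le \min\{\varphi +_\lambda \psi, \varphi \boxJ_\lambda \psi\}$, completing the proof.
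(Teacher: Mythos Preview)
Your plan has a genuine gap in how it treats the operations $+_\lambda$ and $\boxA_\lambda$ at the epi-graph level. It is true that $\epi(\varphi \boxL_\lambda \psi) = (1-\lambda)\epi(\varphi) + \lambda\epi(\psi)$ and, by Fact~\ref{fact-Radial-Summation}, that $\epi(\varphi \boxJ_\lambda \psi)$ is the radial harmonic average of the epi-graphs. But the assertion that $+_\lambda$ corresponds to an ``intersection-type'' operation and $\boxA_\lambda$ to a ``convex-hull-type'' operation on epi-graphs is incorrect for general $\varphi,\psi$. The intersection $\epi(\varphi)\cap\epi(\psi)$ is $\epi(\max\{\varphi,\psi\})$, not $\epi(\varphi+_\lambda\psi)$; since $\varphi+_\lambda\psi \le \max\{\varphi,\psi\}$, the body chain you propose would only yield $\varphi\boxL_\lambda\psi \le \max\{\varphi,\psi\}$, which is strictly weaker than the required $\varphi\boxL_\lambda\psi \le \varphi+_\lambda\psi$. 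Likewise $\epi(\varphi)\vee\epi(\psi)$ is the epi-graph of the convex envelope of $\min\{\varphi,\psi\}$, not of $\varphi\boxA_\lambda\psi$, and the resulting inclusion goes the wrong way for your purpose. The correspondences you have in mind hold in Examples~\ref{ex-indicators} and \ref{ex-norms} only because indicator functions and norms are the special classes on which the four transforms act as genuine body maps; they do not persist for arbitrary geometric convex functions.

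The paper's argument avoids this problem. Inequality $\varphi\boxL_\lambda\psi \le \varphi+_\lambda\psi$ is read off directly from the infimum defining inf-convolution by taking $x=y=z$. Inequality $\varphi\boxL_\lambda\psi \le \varphi\boxJ_\lambda\psi$ is proved exactly as you suggest, via epi-graphs and Firey's inclusion $((1-\lambda)K+\lambda T)^\circ \subseteq (1-\lambda)K^\circ + \lambda T^\circ$. The remaining two inequalities are then obtained not by a fresh epi-graph argument but by conjugating the first two under $\J$: using the identities $\J\bigl((\J\varphi)\boxL_\lambda(\J\psi)\bigr)=\varphi\boxA_\lambda\psi$, $\J\bigl((\J\varphi)+_\lambda(\J\psi)\bigr)=\varphi\boxJ_\lambda\psi$, and $\J\bigl((\J\varphi)\boxJ_\lambda(\J\psi)\bigr)=\varphi+_\lambda\psi$, together with the fact that $\J$ is order preserving. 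This conjugation trick is the missing idea in your proposal.
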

\begin{proof} We need to prove the following four inequalities.
\begin{itemize}
	\item[1.] $\varphi \boxL_\lambda \psi \le \varphi     +_\lambda \psi$
	\item[2.] $\varphi \boxL_\lambda \psi \le \varphi \boxJ_\lambda \psi$
	\item[3.] $\varphi \boxA_\lambda \psi \le \varphi     +_\lambda \psi$
	\item[4.] $\varphi \boxA_\lambda \psi \le \varphi \boxJ_\lambda \psi$
\end{itemize}	
The first inequality follows by letting $x=y=z$ in the definition of
the inf-convolution:
\begin{equation}\label{eq-boxL<standard}
(\varphi \boxL_\lambda \psi)(z) =
\inf_{z = (1-\lambda)x + \lambda y}
\{(1-\lambda)\varphi(x)+\lambda \psi(y)\} \le
(1-\lambda)\varphi(z)+\lambda \psi(z).
\end{equation}
For the second inequality, recall that duality is a ``convex
operation'', in the following sense (see \cite{Firey}). If $K$ and
$T$ are convex and $0 < \lambda < 1$, then:
\begin{equation}\label{eq-convex-dual}
((1-\lambda)K + \lambda T)^\circ \subseteq (1-\lambda)K^\circ + \lambda T^\circ.
\end{equation}
Combining the above with Fact \ref{fact-Radial-Summation} we get:
\begin{eqnarray*}
\epi(\varphi \boxJ_\lambda \psi) &=&
\left(\,\,
(1-\lambda) \epi(\varphi)^\circ + \lambda \epi(\psi)^\circ
\,\,\right)^\circ \,\,\, \subseteq
\\ &\subseteq&
(1-\lambda)\epi(\varphi) + \lambda \epi(\psi) \,\,\, =
\\ &=&
\epi (\varphi \boxL_\lambda \psi),
\end{eqnarray*}
which is equivalent to
\begin{equation}\label{eq-boxL<boxJ}
\varphi \boxL_\lambda \psi \leq \varphi \boxJ_\lambda \psi.
\end{equation}
The third and fourth inequalities we need to prove, are in fact
equivalent to the second and first inequalities, respectively. To see
this, first note that:
\begin{equation}\label{eq-J-boxL}
\J\left( (\J\varphi) \boxL_\lambda (\J\psi) \right) =
\varphi \boxA_\lambda \psi,
\end{equation}
\begin{equation}\label{eq-J-boxJ}
\J\left( (\J\varphi) \boxJ_\lambda (\J\psi) \right) =
\varphi +_\lambda \psi,
\end{equation}
\begin{equation}\label{eq-J-box+}
\J\left( (\J\varphi) +_\lambda (\J\psi) \right) =
\varphi \boxJ_\lambda \psi.
\end{equation}
Since $\J$ is order preserving, \eqref{eq-boxL<standard} implies
that:
\[
\varphi \boxA_\lambda \psi = 
\J\left( (\J\varphi) \boxL_\lambda (\J\psi) \right) \le
\J\left( (\J\varphi)     +_\lambda (\J\psi) \right) =
\varphi \boxJ_\lambda \psi.
\]
Similarly, \eqref{eq-boxL<boxJ} implies that:
\[
\varphi \boxA_\lambda \psi = 
\J\left( (\J\varphi) \boxL_\lambda (\J\psi) \right) \le
\J\left( (\J\varphi) \boxJ_\lambda (\J\psi) \right) = \varphi +_\lambda \psi.
\]
\end{proof}

\section{Convexity and concavity properties of $\vol$}
We begin this section by noting that the geometric Example
\ref{ex-norms} demonstrates that for any $p$, $\vol$ is not
$p$-concave with respect to $\boxJ$, i.e. $\vol$ is not even
quasi-concave. Moreover, we show that in contrast to Theorem
\ref{thm-convex-upto2}, a concavity inequality does not even hold up
to a constant. More precisely,
\begin{cor}\label{cor-no-concavity}
The functional $\vol$ is not quasi-concave with respect to $\boxJ$.
Moreover:
\[
\inf_{\varphi,\psi,\lambda}
\left\{
\frac{\vol(\varphi \boxJ_\lambda \psi)}
{\min \left\{ \vol(\varphi),\, \vol(\psi) \right\} }
\right\} = 0.
\]
\end{cor}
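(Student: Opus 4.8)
The plan is to reduce the statement to an elementary volume computation for convex bodies, using the norm case from Example~\ref{ex-norms}. Fix symmetric convex bodies $K,T$ containing the origin in their interiors, and set $\varphi=\|\cdot\|_K$ and $\psi=\|\cdot\|_T$, so that $\varphi,\psi\in\cvx_0(\R^n)$. By Example~\ref{ex-norms} we have $\varphi\boxJ_{1/2}\psi=\|\cdot\|_{K\cap T}$; the point is that the gauge of an intersection is the pointwise maximum of the two gauges, and $\boxJ_{1/2}$ realizes exactly this operation on norms. Since the infimum in the corollary is taken over all $\lambda$, it suffices to restrict attention to $\lambda=\tfrac12$, and the whole problem becomes a question about the volumes of $K$, $T$ and $K\cap T$.

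The second step is to evaluate $\vol$ on a norm. Using that the sublevel set $\{x:\|x\|_K\le s\}$ equals $sK$, a layer-cake (radial) integration gives
\[
\vol(\|\cdot\|_K)=\intr e^{-\|x\|_K}\,dx
=\int_0^\infty e^{-s}\left|\{x:\|x\|_K\le s\}\right|\,ds
=\int_0^\infty e^{-s}s^n|K|\,ds
=n!\,|K|,
\]
where $|\cdot|$ denotes Lebesgue volume. Consequently the ratio in the corollary, evaluated on our pair with $\lambda=\tfrac12$, simplifies to
\[
\frac{\vol(\varphi\boxJ_{1/2}\psi)}{\min\{\vol(\varphi),\vol(\psi)\}}
=\frac{n!\,|K\cap T|}{\min\{n!\,|K|,\,n!\,|T|\}}
=\frac{|K\cap T|}{\min\{|K|,|T|\}}.
\]

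It remains to choose $K,T$ so that this quantity is arbitrarily small. The key observation, and the only real idea needed, is that two bodies of large volume can have an intersection of arbitrarily small relative volume if they are elongated in transverse directions. Concretely, for $n\ge 2$ and $R>1$ take
\[
K=[-R,R]\times[-1,1]^{n-1},\qquad
T=[-1,1]\times[-R,R]\times[-1,1]^{n-2}.
\]
Then $|K|=|T|=2^nR$, while $K\cap T=[-1,1]^n$ has volume $2^n$, so the ratio equals $1/R$. Letting $R\to\infty$ drives the ratio to $0$; since the ratio is always nonnegative, the infimum in the corollary is $0$. In particular, any fixed $R>1$ already yields $\vol(\varphi\boxJ_{1/2}\psi)<\min\{\vol(\varphi),\vol(\psi)\}$, so $\vol$ fails to be quasi-concave with respect to $\boxJ$. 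There is no serious obstacle here beyond recognizing the reduction to body volumes and exhibiting the transverse-slab family; the construction works for every $n\ge2$.
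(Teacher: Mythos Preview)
Your argument is correct and follows essentially the same route as the paper's own proof: reduce to norms via Example~\ref{ex-norms}, use $\vol(\|\cdot\|_L)=n!\,\vol_n(L)$, and exhibit boxes whose intersection has arbitrarily small relative volume. The paper phrases the last step as ``boxes of unit volume with intersection of arbitrarily small volume'' without writing them down, whereas you give an explicit transverse-slab family; the content is the same. One small remark: by insisting on \emph{symmetric} bodies you are forced to assume $n\ge 2$, as you note. In dimension $n=1$ the ratio $|K\cap T|/\min\{|K|,|T|\}$ equals $1$ for symmetric intervals, but the restriction is unnecessary---Example~\ref{ex-norms} only requires $0$ in the interior, so asymmetric intervals such as $K=[-\varepsilon,1/\varepsilon]$ and $T=-K$ handle $n=1$ as well.
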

\begin{proof}
	We may choose $K,T$ to be boxes of unit volume, with intersection of
	arbitrarily small volume (we denote volume of sets in $\R^n$ by
	$\vol_n$). Since $\vol(\|\cdot\|_L) = n! \vol_n(L)$ for every convex
	body $L$, the functions $\varphi, \,\psi$ of Example \ref{ex-norms}
	satisfy
	\[
	\frac{\vol(\varphi \boxJ_\frac12 \psi)}
	{\min \left\{ \vol(\varphi),\, \vol(\psi) \right\} } =
	\frac{\vol_n(K \cap T)}
	{\min \left\{ \vol_n(K),\, \vol_n(T) \right\} } =
	\vol_n(K \cap T),
	\]
	which as mentioned above, can be made arbitrarily small.
\end{proof}
We now turn to prove the main results of the paper. Recall that in
\cite{AFS}, the measure $\nu$ on $\R^n \times \R^+$ was defined by
$d\nu = n(z) dxdz\,$ for $n(z) =
e^{-\frac1z} \left( \frac1z \right)^{n+2}$, and it was shown that for
any $\phi \in \cvx_0(\R^n)$
\begin{equation}\label{eq-nu-for-phi}
\nu \left( \epi \phi \right) = \intr e^{-\J \phi}.
\end{equation}
We shall rely on this fact in the proof of Theorem \ref{thm-no-convex},
when constructing a counter example to quasi-convexity of $\vol$.

\noindent {\bf Proof of Theorem \ref{thm-no-convex}.}
Let us define the (decreasing) function $G: \left[ 0, \infty \right]
\to \left[ 0, n! \right]$ by $G(t) = \int_t^\infty n(z) dz =
\int_0^\frac1t s^n e^{-s} ds$. Since $\J(\J\phi) = \phi$, we may
rewrite \eqref{eq-nu-for-phi} as
\begin{equation}\label{eq-nu-for-Jphi}
\intr e^{-\phi} =
\nu \left( \epi \J\phi \right) =
\intr dx\int_{\left( \J\phi \right) (x)}^\infty n(z)dz =
\intr G\left( \left( \J\phi \right) (x) \right) dx.
\end{equation}
Since $\J \left(\varphi \boxJ_\lambda \psi\right) =
(1-\lambda) \J\varphi + \lambda \J\psi$, the problem boils down to
convexity of $G$. Dif{}ferentiating twice we get $G''(t) = \frac
{(n+2)e^{-\frac1z}}{z^{n+4}} \left( z - \frac1{n+2} \right) $, thus
$G$ is concave on $\left[0, \frac1{n+2} \right]$. Therefore we shall
choose $\varphi,\psi$ such that $\J\varphi,\J\psi$ attain values in
$\left[0, \frac1{n+2} \right]$, to get
\begin{equation*}\label{eq-G-defies-arithmetic}
\intr e^{-\varphi \boxJ_\lambda \psi} \ge
(1-\lambda) \intr e^{-\varphi}
  +\lambda  \intr e^{-   \psi}.
\end{equation*}
If we can also choose $\varphi,\psi$ with equal volumes
$\intr e^{-\varphi} = \intr e^{-\psi}$, and such that the above
inequality is strict, the theorem will be proven. The first
condition is satisfied by choosing $\psi(x) = \varphi(-x)$. For the
second condition, it suf{}fices that $\J\varphi,\J\psi$ be supported on
the same set, attain values in $\left[0, \frac1{n+2} \right]$, and
such that $\J\varphi\neq\J\psi$ on a set of positive measure. For
example, we may choose them to be piecewise linear on the cube. More
precisely, let us denote
the unit ball of $l_\infty^n$ by $C = [-1, 1]^n$,
and the half-space $\left\{ x\in \R^n \,:\, x_1\le n+2 \right\}$ by
$H$, so that $\|x\|_H = \max\left\{0, \frac{x_1}{n+2} \right\}$.
We define $\varphi$ by setting $\J\varphi =
\max \left\{ 1_C^\infty, \|\cdot\|_H \right\}$, that is:
\[
\J\varphi (x) =
\left\{
\begin{array}{lr}
\max\left\{0, \frac{x_1}{n+2} \right\}	&: x     \in C \\
+\infty						&: x \not\in C \\
\end{array}
\right\}.
\]
By \eqref{eq-nu-for-Jphi} we have
\begin{eqnarray*}\label{eq-proving1.1}
\vol\left( \varphi \boxJ_\lambda \psi \right) &=&
\int_C G\left(\left(\J\left( \varphi \boxJ_\lambda \psi\right)\right)(x)\right) dx =
\\ \\ &=&
\int_C G\left(
			(1-\lambda)\left(\J\varphi \right)(x) + \lambda\left(\J\psi \right)(x)
		\right) dx >
\\ \\ &>&
(1-\lambda) \int_C G\left( \left(\J\varphi \right)(x) \right) dx +
   \lambda  \int_C G\left( \left(\J\psi    \right)(x) \right) dx =
\\ \\ &=&
(1-\lambda) \vol\left(\varphi\right) + \lambda \vol\left(\psi\right) =
\\ \\ &=&
\max \left\{ \vol\left(\varphi\right), \, \vol\left(\psi\right) \right\},
\end{eqnarray*}
which completes the proof. \qed

\begin{rem}\label{rem-sqrt-n}
In the Appendix we show that in fact the functions $\varphi,\psi$
satisfy:
\begin{equation}\label{eq-pretty-extremal-counter-example}
\frac{\vol\left( \varphi \boxJ_\frac12 \psi \right)}
{\max \left\{ \vol\left(\varphi\right), \, \vol\left(\psi\right) \right\}} =
1 +  \frac1{\sqrt{8\pi n}} + O\left(\frac1n\right),
\end{equation}
which implies that $c_\infty > 1 + \frac{c}{\sqrt{n}}$ for some $c>0$.
\end{rem}

\noindent {\bf Proof of Theorem \ref{thm-convex-upto2}.} Recall that
the function $G(t) = \int_t^\infty n(z)dz$ is decreasing on $[0,\infty]$,
therefore on every interval (bounded or unbounded), $G$ attains its
maximum at the lower endpoint. In particular
$G((1-\lambda)a + \lambda b) \le G(a) + G(b)$. Thus we may use
\eqref{eq-nu-for-Jphi} again, to get
\begin{eqnarray*}\label{eq-proving1.2}
\vol\left( \varphi \boxJ_\lambda \psi \right) &=&
\intr G\left(\left(\J\left( \varphi \boxJ_\lambda \psi\right)\right)(x)\right) dx =
\\ \\ &=&
\intr G\left(
(1-\lambda)\left(\J\varphi \right)(x) + \lambda\left(\J\psi \right)(x)
\right) dx \le
\\ \\ &\le&
\intr G\left( \left(\J\varphi \right)(x) \right) dx +
\intr G\left( \left(\J\psi    \right)(x) \right) dx =
\\ \\ &=&
\vol\left(\varphi\right) + \vol\left(\psi\right) \le
\\ \\ &\le&
2\cdot\max \left\{ \vol\left(\varphi\right), \, \vol\left(\psi\right) \right\}.
\end{eqnarray*}
\qed

\section*{Appendix}
We now turn to prove \eqref{eq-pretty-extremal-counter-example},
which as mentioned in Remark \ref{rem-sqrt-n}, implies that for some
$c>0$ we have $c_\infty > 1 + \frac{c}{\sqrt{n}}$. We need to compute the
volumes of the functions $\varphi,\, \psi,\, \varphi \boxJ_\lambda \psi$
defined in the proof of Theorem \ref{thm-no-convex}. Recall our
notation for the cube $C = [-1, 1]^n$ and the half-space $H =
\left\{ x\in \R^n \,:\, x_1\le n+2 \right\}$, and note that
$S_\lambda := \left( (1-\lambda)H^\circ + \lambda (-H)^\circ \right)^\circ$
is the slab $\left\{ x\in \R^n \,:\,
-\frac{n+2}{\lambda} \le x_1 \le \frac{n+2}{1-\lambda}
\right\}$.
\[
\J \left( \varphi \boxJ_\lambda \psi \right) (x) =
\left\{
\begin{array}{lr}
\max\left\{\lambda (-x_1), (1-\lambda)x_1 \right\}	&: x     \in C \\
+\infty						&: x \not\in C \\
\end{array}
\right\} =
\max \left\{ 1_C^\infty, \|\cdot\|_{S_\lambda} \right\}.
\]
Since $\J$ interchanges with taking a maximum (see \cite{AM-Hidden}),
we have:
\begin{eqnarray*}\label{eq-varphi}
\varphi =
\J \left( \max \left\{ 1_C^\infty, \|\cdot\|_H \right\} \right) =
\max \left\{ \J 1_C^\infty, \J \|\cdot\|_H \right\} =
\max \left\{ \| \cdot \|_C, \, 1_{H}^\infty \right\},
\end{eqnarray*}
and similarly $\varphi \boxJ_\lambda \psi =
\max \left\{ \| \cdot \|_C, \, 1_{S_\lambda}^\infty \right\}$.
In order to compute the volumes of $\varphi$ and
$\varphi \boxJ_\lambda \psi$ we require their level sets. At this
point we choose $\lambda = \frac12$, which maximizes
$\vol\left( \varphi \boxJ_\lambda \psi \right)$, since
$\lambda \mapsto \vol\left( \varphi \boxJ_\lambda \psi \right)$ is
concave and symmetric about $\lambda = \frac12$. We get:
\begin{eqnarray*}
L_z(\varphi) = \left( z C \right) \cap H =
\left\{
\begin{array}{lr}
z C &:  z \le n+2 \\
\left[ -z, n+2 \right] \times [-z,z]^{n-1} &:  n+2 \le z
\end{array}
\right\},
\end{eqnarray*}
\begin{eqnarray*}
L_z(\varphi \boxJ_\frac12 \psi) = \left( z C \right) \cap S_\frac12 =
\left\{
\begin{array}{lr}
z C &:  z \le 2(n+2) \\
\left[-2(n+2), 2(n+2)\right] \times [-z,z]^{n-1} &:
2(n+2) \le z
\end{array}
\right\}.
\end{eqnarray*}
With the (volumes of the) level sets known, we may use Fubini to get
\begin{eqnarray*}
\vol(\varphi) = \vol(\psi) &=& \int_{\epif} d\mu =
\int_0^\infty \vol_n\left( L_z(\varphi) \right) e^{-z} dz =
\\ \\ &=&
2^n \int_0^{n+2} z^n e^{-z} dz +
2^{n-1}\int_{n+2}^\infty (n+2 + z)z^{n-1} e^{-z} dz =
\\ \\ &=&
2^{n}\int_0^\infty z^n e^{-z} dz +
2^{n-1} \int_{n+2}^\infty (n+2 - z) z^{n-1} e^{-z} dz =
\\ \\ &=&
2^n n! - 2^{n-1} \int_{n+2}^\infty (z - (n+2)) z^{n-1} e^{-z} dz =
\\ \\ &=&
2^n n!
\left(
	1 -
	\frac{\int_{n+2}^\infty \left(\frac{z - (n+2)}{2}\right) z^{n-1} e^{-z} dz}{n!} 
\right) \equiv 2^n n! \left(1 - R(n) \right).
\end{eqnarray*}
As for $\varphi \boxJ_\frac12 \psi$, we have:
\begin{eqnarray*}
\vol(\varphi \boxJ_\frac12 \psi) &=&
\int_{\epi \left( \varphi \boxJ_\frac12 \psi \right) } d\mu =
\int_0^\infty \vol_n\left( L_z( \varphi \boxJ_\frac12 \psi ) \right) e^{-z} dz =
\\ \\ &=&
2^n \int_0^{2(n+2)} z^n e^{-z} dz +	2^n \int_{2(n+2)}^\infty 2(n+2) z^{n-1} e^{-z} dz =
\\ \\ &=&
	2^n     \int_0^{\infty} z^n e^{-z} dz -
	2^n \int_{2(n+2)}^\infty \left(z - 2(n+2)\right) z^{n-1} e^{-z} dz =
\\ \\ &=&
	2^n n!
	\left(
		1 -
		\frac{\int_{2(n+2)}^\infty \left(z - 2(n+2)\right) z^{n-1} e^{-z} dz}{n!}
	\right)	\equiv 2^n n! \left(1 - r(n) \right).
\end{eqnarray*}
Note that $\int_{n+2}^\infty z^n e^{-z} dz < n!\,$, and by
Taylor's expansion, $\frac{\left(1 + \frac2n\right)^{n+1}}{e^2} = 1 + O\left(\frac1n\right)$.
We use these estimates
together with Stirling's formula $\frac{n!}{\sqrt{2\pi n}} =
\left( \frac{n}{e} \right)^n \cdot \left(1 + O\left(\frac1n\right) \right)$
to obtain a lower bound on $R(n)$. After integration by parts we get:
\begin{eqnarray*}
R(n) &=&\frac{1}{n!}
\left(
	\frac{\left(1 + \frac2n\right)^{n+1}}{2e^2}
	\left( \frac{n}{e} \right)^n
	-\frac1n \int_{n+2}^\infty z^n e^{-z} dz
\right) =
\\ \\ &=&
\frac{\left(1 + O\left(\frac1n\right) \right)}{\sqrt{8\pi n}}
-\frac1n \frac{\int_{n+2}^\infty z^n e^{-z} dz}{n!} >
\\ \\ &>&
\frac{1}{\sqrt{8\pi n}} - \frac1{n} + O\left(\frac1{n^{\frac32}}\right) =
\frac{1}{\sqrt{8\pi n}} + O\left(\frac1n\right).
\end{eqnarray*}
To obtain an upper bound on $r(n)$, we begin again with integration by parts.
\begin{eqnarray*}
r(n) &=&\frac{1}{n!}
\left(
	\frac{\left(1 + \frac2n\right)^{n+1}}{e^3}
	\left( \frac{n}{e} \right)^n
	\left( \frac{2}{e} \right)^{n+1}
	-\left(1+\frac4n\right) \int_{2(n+2)}^\infty z^n e^{-z} dz
\right) <
\\ \\ &<&
\frac{\left(1 + O\left(\frac1n\right) \right)}{e\sqrt{2\pi n}}
\left( \frac{2}{e} \right)^{n+1}
< \left( \frac{2}{e} \right)^n.
\end{eqnarray*}
To sum it up,
\begin{eqnarray*}\label{eq-sumitup}
\frac{\vol\left( \varphi \boxJ_\frac12 \psi \right)}
{\max \left\{ \vol\left(\varphi\right), \, \vol\left(\psi\right) \right\}} &=&
\frac{1-r(n)}{1-R(n)} >
\frac
{1-\left( \frac{2}{e} \right)^n}
{1-\frac{1}{\sqrt{8\pi n}} + O\left(\frac1n\right)} =
\\ \\ &=&
\left( 1 - \left( \frac{2}{e} \right)^n\right)
\cdot
\left( 1 + \frac{1}{\sqrt{8\pi n}} + O\left(\frac1n\right) \right) =
\\ \\ &=&
\left( 1 + \frac{1}{\sqrt{8\pi n}} + O\left(\frac1n\right) \right).
\end{eqnarray*}

\bibliographystyle{amsplain}

\smallskip \noindent
Dan Florentin \\
Department of Mathematics \\
Cleveland State University, Cleveland, OH, 44115-2214, USA \\
{\it email}: danflorentin@gmail.com  \\

\smallskip \noindent
Alexander Segal \\
Afeka Academic College of Engineering, Tel Aviv, 69107, Israel\\
{\it email}: segalale@gmail.com  \\

\end{document}